\def\lf{\left\lfloor}   
\def\rf{\right\rfloor}
\DeclareMathOperator{\Var}{\operatorname{Var}}
\def\divdots{\rlap{\raisebox{-1pt}{.}}{\rlap{\raisebox{2pt}{.}}\raisebox{5pt}{.}}}
\def\ndivby{\mathrel{
    \divdots
    \kern-0.35em\raise0.22ex\hbox{/}
}}
\renewcommand{\leq}{\leqslant}
\renewcommand{\geq}{\geqslant}
\def\fs{\kern 0.5em}
\newcounter{prcnt}
\newcounter{pucnt}
\newcommand{\prmain}[1]{ 
    \medskip%
    \setcounter{pucnt}{0}%
    \stepcounter{prcnt}%
    \noindent\textbf{%
    \theprcnt%
    \ifthenelse{\equal{#1}{1}}{*}{}%
    .}\fs%
}
\newcommand{\pumain}[1]{
    \stepcounter{pucnt}{%
    \noindent\bf(\alph{pucnt}%
    \ifthenelse{\equal{#1}{1}}{*}{}%
    )}\fs%
}
\newtheorem{problem}{Problem}
\newtheorem{theorem}{Theorem}
\newtheorem{proposition}{Proposition}
\newtheorem{lemma}{Lemma}
\newtheorem{remark}{Remark}
\newtheorem*{theorem*}{Теорема}
\newtheorem{corollary}{Corollary}
\theoremstyle{definition}
\newtheorem*{definition}{Definition}
\title{On the minimal sum of edges in a signed edge-dominated graph}
\author{ Danila Cherkashin$^\mathrm{a,b,c}$,~
 Pavel Prozorov$^{\mathrm{c}}$\\
{\small ~a. Chebyshev Laboratory, St. Petersburg State University, 14th Line V.O., 29B, Saint Petersburg 199178 Russia}\\
{\small ~b. Moscow Institute of Physics and Technology, Laboratory of Combinatorial and Geometric Structures}\\ 
{\small ~c. School 533, St. Petersburg, Russia}}
\date{}
\begin{document}

\maketitle

\begin{abstract}
Let $G$ be a simple graph with $n$ vertices and $\pm 1$-weights on edges. Suppose that for every edge $e$ the sum of edges adjacent to $e$ (including $e$ itself) is positive. Then the sum of weights over edges of $G$ is at least $-\frac{n^2}{25}$. Also we provide an example of a weighted graph with described properties and the sum of weights $-(1+o(1))\frac{n^2}{8(1 + \sqrt{2})^2}$. 

The previous best known bounds were $-\frac{n^2}{16}$ and $-(1+o(1))\frac{n^2}{54}$ respectively. 
We show that the constant $-1/54$ is optimal under some additional conditions.
\end{abstract}

\section{Introduction}

A graph (finite, simple, undirected) is a pair $(V, E)$, where $V$ stands for a set of vertices, and $E$ denotes a set of unordered pairs of vertices, elements of $E$ are called edges.
Let $G$ be a graph; for a given edge $e = (u,v)$ define its \textit{closed edge-neighborhood} as an edge subset $N[e]$ formed by $e$ and all adjacent to $e$ edges of $G$. 
A weight function $f : E \to \{+1; -1\}$ is called a \textit{signed edge domination function} of $G$ if 
\[
\sum_{e' \in N[e]} f(e') \geq 1
\] 
for every $e \in E$; in this case we say that $(G,f)$ is a \textit{SED-pair} of order $|V|$. Let $s[(G,f)]$ be the sum of weights over all edges of a graph $G$ equipped by a weight function $f$. 

Denote by $E_+$ the set $\{(u,v)\in E \,| \, f(u,v)=1\}$ and  by $E_-$ the set $\{(u,v)\in E|f(u,v)=-1\}$. Define 
\[
s_v = \sum_{e \in N(v)}f(e);
\]
for each $v\in V$, where $N(v)$ stands for the set of edges containing $v$.
Let $V_+$ be $\{v\in V| s_v\geq 0 \}$ and $V_-$  be $\{v\in V| s_v < 0 \}$.

The following problem was posed by Xu in~\cite{xu2001signed,xu2005edge}.
\begin{problem} What is 
\[
g(n) := \min \{s[(G,f)] \,| \,(G,f) \mbox{ is a SED-pair of order } n\} 
\]
for each positive integer $n$?
\label{mainProblem}
\end{problem}

Note that for every $g(n) \leq 0$ since an empty graph provides a SED-pair. 
The only known result was provided by the following theorem.
\begin{theorem}[Akbari --  Bolouki -- Hatami -- Siami~\cite{akbari2009signed}]
\label{prev}
\begin{itemize}
    \item [(i)] For every $n$ 
\[
 g(n) \geq -\frac{n^2}{16}.
\]
\item[(ii)] There is a sequence of SED-pairs of order $n$ that satisfies\footnote[1]{In fact the authors claim the bound $-\frac{n^2}{72}$ but the provided example gives the bound $-(1+o(1))\frac{n^2}{54}$.}
\[
 s[G,f]\leq -(1+o(1))\frac{n^2}{54}.
\]
\end{itemize} 
\end{theorem}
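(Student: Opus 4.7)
Part (i) approach: the SED inequality at edge $uv$ rewrites as $s_u+s_v \geq 1+f(uv)$, hence $s_u+s_v \geq 0$ for every edge. So the set $V_- := \{v:s_v<0\}$ is independent in $G$, and for each $v \in V_-$ every neighbour $u$ satisfies $s_u \geq |s_v|$. Set $a := |V_-|$, $N := \sum_{v \in V_-}|s_v|$, $P := \sum_{v \notin V_-} s_v$; then $P \geq 0$ and $2\,s[(G,f)] = P-N$, so it suffices to upper bound $N-P$.

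The main step is a double count of edges incident to $V_-$: using $|s_v| \leq s_u$ on every such edge and the fact that each $u \notin V_-$ has at most $a$ neighbours in $V_-$,
\[
\sum_{v \in V_-} \deg(v)\,|s_v| \;=\; \sum_{\substack{uv \in E\\ v \in V_-}} |s_v| \;\leq\; \sum_{\substack{uv \in E\\ v \in V_-}} s_u \;\leq\; a P.
\]
On the other hand, since $\deg(v)\geq |s_v|$ for $v \in V_-$, the left-hand side is at least $\sum_{v \in V_-} |s_v|^2 \geq N^2/a$ by Cauchy--Schwarz, yielding $P \geq N^2/a^2$. Combined with $N \leq a(n-a)$ (all neighbours of $V_-$ lie in the complement of $V_-$), maximising $N - N^2/a^2$ over $a \in [0,n]$ and $N \in [0, a(n-a)]$ with a case split at $a = 2n/3$ gives a maximum of $n^2/8$, attained at $a = 3n/4$, $N = 3n^2/16$. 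Hence $-2\,s[(G,f)] \leq n^2/8$, giving part (i).

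For part (ii), I propose the following explicit construction. Split $V$ into $V_+$ of size $\sim n/3$ and $V_-$ of size $\sim 2n/3$; place a positive clique on $V_+$ and join $V_+$ to $V_-$ by a biregular bipartite negative graph with $m = \lfloor 2n/9 \rfloor$ negative edges per vertex of $V_+$ and $r \sim n/9$ per vertex of $V_-$ (with $m |V_+| = r |V_-|$; such a bipartite graph exists for $n$ in suitable residues, the remaining residues contributing only an $o(1)$ error). A direct computation shows $s_u \sim n/9$ for $u \in V_+$ and $s_u \sim -n/9$ for $u \in V_-$, so both SED conditions hold as soon as $n$ is large, while $|E_+| - |E_-| = \binom{|V_+|}{2} - m|V_+| = -(1+o(1))\,n^2/54$.

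The main obstacle is in part (i): the trivial bound $P \geq 0$ yields only $s[(G,f)] \geq -n^2/8$, and squeezing the factor of two out of the argument genuinely requires the non-trivial inequality $P \geq N^2/a^2$, which is in turn extracted from the edgewise SED comparison $|s_v| \leq s_u$ via Cauchy--Schwarz. The construction in part (ii) is more mechanical: after checking the SED condition at each type of edge, the only delicate point is managing small integrality losses, which are absorbed into the $(1+o(1))$ factor.
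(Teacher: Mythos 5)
First, a point of reference: the paper does not prove this statement. Theorem~\ref{prev} is quoted from Akbari--Bolouki--Hatami--Siami and used only as a benchmark, so there is no in-paper proof to compare against; your argument has to be judged on its own. Your part (i) is correct and complete: the identity $\sum_{e'\in N[e]}f(e')=s_u+s_v-f(e)$ gives $s_u+s_v\geq 1+f(uv)\geq 0$ on every edge, hence $V_-$ is independent and $s_u\geq|s_v|$ across every edge leaving $V_-$; the double count $\sum_{v\in V_-}\deg(v)|s_v|\leq aP$, combined with $\deg(v)\geq|s_v|$ and Cauchy--Schwarz, does give $P\geq N^2/a^2$, and the optimization of $N-N^2/a^2$ under $N\leq a(n-a)$ indeed peaks at $n^2/8$ (at $a=3n/4$, $N=3n^2/16$), so $g(n)\geq -n^2/16$. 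Note that this is a genuinely different route from the one the paper takes for its stronger bound $-n^2/25$ in Section~3: there the authors isolate the single worst vertex $a$ with $s_a=-x$ minimal, extract $\sum_{v\in V_+}s_v\geq x^2$ from its negative neighbourhood alone, and close with a two-parameter minimax; your version spreads Cauchy--Schwarz uniformly over all of $V_-$, which is cleaner but forfeits exactly the leverage that accounts for the gap between $1/16$ and $1/25$.

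Part (ii) contains a genuine, though easily repaired, error. At a negative edge $(u,w)$ the SED condition is $s_u+s_w+1\geq 1$, i.e.\ $s_u+s_w\geq 0$ \emph{exactly}; it is not an asymptotic condition, so the inference ``$s_u\sim n/9$ and $s_w\sim -n/9$, hence the condition holds for large $n$'' is invalid --- the sum of two such quantities can equal $-1$ for every $n$. With your parameters it does: $s_u=|V_+|-1-m$ and $s_w=-r$, so the requirement is $m+r\leq |V_+|-1$, whereas $m=\lfloor 2n/9\rfloor$, $r=m|V_+|/|V_-|\approx n/9$ and $|V_+|\approx n/3$ give $m+r\approx n/3=|V_+|$, failing by $1$. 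The fix is to take $m$ at most $(|V_+|-1)|V_-|/n$ (about $2n/9-2/3$), say the largest admissible even integer so that $r=m/2$ is integral; this changes $|E_-|$ by $O(n)$, which is absorbed into the $(1+o(1))$, and the optimization of $\binom{k}{2}-k(k-1)(n-k)/n$ over $k=cn$ then correctly yields $-n^2/54$ at $c=1/3$, matching the constant in the footnote. So the construction is the right one, but the SED condition at the negative edges must be verified with the exact inequality rather than waved through on asymptotics.
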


We refine both items as follows.

\begin{theorem}
\label{mainT}
\begin{itemize}
    \item [(i)] For every $n$ 
\[
 g(n)\geq -\frac{n^2}{25}.
\]
    \item[(ii)] %Suppose that $n=4(p+q)p+1$, where $p$ and $q$ are positive integers satisfying $p^2=2q^2+1$. 
    For every $n$ there is a SED-pair of order $n$ that satisfies  
\[
s[G,f] < -(1+o(1))\frac{n^2}{8(1 + \sqrt{2})^2}.
\]
Moreover, if $n=4(p+q)p$, where $p > 1$ and $q > 1$ are positive integers satisfying $p^2=2q^2-1$, then
\[
s[G,f] = \lf -\frac{n^2}{8(1 + \sqrt{2})^2} + \frac{3\sqrt{2} - 4}{4} n \rf.
\]
\end{itemize} 
\end{theorem}

Note that there are infinitely many $p$ and $q$ satisfying the condition $p^2 = 2q^2 - 1$, since it is a special case of Pell's equation; it is well known that the positive solutions are
\[
p =  \frac{\sqrt{2}-1}{2} (3 + 2 \sqrt{2})^k - \frac{1 + \sqrt{2}}{2} (3 - 2 \sqrt{2})^k,
\quad \quad 
q =  \frac{\sqrt{2}-1}{2\sqrt{2}} (3 + 2 \sqrt{2})^k + \frac{1 + \sqrt{2}}{2\sqrt{2}} (3 - 2 \sqrt{2})^k, 
\quad \quad 
k \in \mathbb{N}.
\]

We show that Theorem~\ref{prev}(ii) is optimal under additional assumptions.

\begin{theorem}
\label{optimalT}
Let $(G,f)$ be a SED-pair of order $n$. Suppose that every $e \in E_-$ connects a vertex from $V_+$ with a vertex from $V_-$; and every $e \in E_+$ connects some vertices from $V_+$. Then
\[
s[(G,f)] \geq - \frac{1}{54} n^2.
\]

\end{theorem}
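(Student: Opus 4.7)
The plan will be to introduce auxiliary parameters $a = |V_+|$ and $b = |V_-|$ with $a + b = n$, and for $u \in V_+$ (respectively $v \in V_-$) denote the positive/negative degrees $p(u), n(u)$ (respectively the degree $m(v)$, which is entirely negative by hypothesis). Then $P = |E_+| = \tfrac{1}{2}\sum_u p(u)$ and $N = |E_-| = \sum_u n(u) = \sum_v m(v)$. I will also set $M(u) := \max_{v \sim u} m(v)$ and $\mu := \sum_{u \in V_+} M(u)$.

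The first step is to extract per-vertex constraints: for each negative edge $(u,v)$ the SED condition gives $s_u + s_v \geq 0$, i.e.\ $p(u) - n(u) \geq m(v)$, whence $p(u) \geq n(u) + M(u)$. Summing over $u \in V_+$ yields $2P \geq N + \mu$, and combined with the degree bound $p(u) \leq a-1$ (so $n(u) + M(u) \leq a-1$) it also yields $N + \mu \leq a(a-1)$. The first of these rearranges to $P - N \geq (\mu - N)/2$, reducing the theorem to proving $N - \mu \leq n^2/27$. Next, summing the SED condition over all negative edges and applying Cauchy--Schwarz to $\sum n(u)^2 \geq N^2/a$ and $\sum m(v)^2 \geq N^2/b$ gives $\sum n(u) p(u) \geq nN^2/(ab)$; combined with $\sum n(u) p(u) \leq (a-1)N$, this produces the auxiliary bound $N \leq (a-1)ab/n$.

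The core of the argument will be proving $N - \mu \leq n^2/27$. I would start with the biregular extremal configuration: $V_+$ induces $K_a$ and the bipartite graph $V_+ \times V_-$ is biregular with $m(v) \equiv N/b$; then $M(u) \equiv N/b$ so $\mu = aN/b$, giving $N - \mu = N(b-a)/b \leq (a-1)a(b-a)/n$, which is maximized over $a + b = n$ (with $a \leq b$) at $a = n/3$, $b = 2n/3$ with value $n^2/27$. For general configurations, I would reduce to the biregular case, leveraging the fact that the SED constraints combined with $p(u) \leq a - 1$ rule out bipartite graphs with small $\mu$: if $m(v)$ is highly concentrated at some $v^*$, then its $m(v^*)$ neighbors in $V_+$ must all have $p(u) \geq n(u) + m(v^*)$, which together with the degree bound forces $N$ to shrink correspondingly.

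The main obstacle will be executing this reduction rigorously. The trouble is that $M(u) = \max$ is not smoothly dependent on the configuration, so naive smoothing arguments fail, and non-regular bipartite graphs can in isolation achieve $\mu$ smaller than the biregular value; it is precisely the interaction between the per-vertex SED constraint, the degree bound $p(u) \leq a - 1$, and the positive-edge SED (which forces rigidity on $V_+$ even though it is slack in the extremal configuration) that prevents such configurations from violating the bound. I expect the cleanest route is to formulate the whole system as an LP in the variables $P, N, \mu$ (with $a, b$ as parameters), verify that the constraints $2P \geq N + \mu$, $N + \mu \leq a(a-1)$, and $N \leq (a-1)ab/n$ have LP-optimum $P - N = -a(a-1)(b-a)/(2n)$, and then maximize over $a + b = n$ to obtain $-n^2/54$ at $a = n/3$.
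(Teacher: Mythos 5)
Your first steps are sound and parallel the paper's: the per-edge consequence $p(u)-n(u)\geq m(v)$ of the SED condition, the double counting, and the auxiliary bound $N\leq (a-1)ab/n$ all check out. The fatal problem is the reduction itself: the intermediate statement $N-\mu\leq n^2/27$ that you reduce the theorem to is \emph{false}, so no amount of work on the ``main obstacle'' can complete the argument. Concretely, take $n\equiv 1\pmod 7$, let $a=|V_+|=(4n+3)/7$ and $b=|V_-|=3(n-1)/7$, put $K_a$ on $V_+$ with all edges positive, choose $r=(n-1)/7$ vertices of $V_+$ and join each of them by a negative edge to every vertex of $V_-$. Then every chosen $u$ has $s_u=(a-1)-b=(n-1)/7=r>0$, every other $u\in V_+$ has $s_u=a-1>0$, every $v\in V_-$ has $s_v=-r<0$, and one checks $s_u+s_v\geq 0$ on negative edges and $s_u+s_{u'}\geq 2$ on positive edges, so this is a legitimate SED-pair satisfying the hypotheses of the theorem. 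Here $N=rb=3(n-1)^2/49$ and $\mu=r^2=(n-1)^2/49$, so $N-\mu=2(n-1)^2/49>n^2/27$ for large $n$ (since $2/49>1/27$). The theorem itself is not violated because $s[(G,f)]>0$ in this example; what fails is your inequality chain: $2P\geq N+\mu$ is extremely lossy here because $\mu=\sum_u\max_{v\sim u}m(v)$ assigns zero to the $a-r$ vertices of $V_+$ without negative neighbours, while their positive degrees still contribute to $P$. The same example kills the LP you propose at the end: with only the constraints $2P\geq N+\mu$, $N+\mu\leq a(a-1)$, $N\leq (a-1)ab/n$, the LP optimum is attained at $\mu=0$, $N=(a-1)ab/n$, giving $P-N\geq -(a-1)ab/(2n)$, which after optimizing $a$ is only $-\tfrac{2}{27}n^2$, not the claimed $-a(a-1)(b-a)/(2n)$.

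The missing idea, which is the heart of the paper's proof, is a quantitative link between the number of positive edges and the full degree sequence on $V_+$ (not just its maximum or its sum). The paper keeps the weighted form $\sum_i (a_i-b_i)b_i\geq\sum_j c_j^2$ of the summed edge inequalities, bounds the left side by $\sqrt{\sum a_i^2\sum b_i^2}-\sum b_i^2$ via Cauchy--Schwarz, and then invokes the Ahlswede--Katona theorem on the maximum of $\sum(\deg v)^2$ among graphs with a given number of edges to control $\sum a_i^2$ in terms of $|E_+|$; this is exactly the coupling that rules out configurations like the one above (where $\sum a_i^2$ is large and hence so is $P$). The resulting four-parameter optimization is then solved case by case in the appendices, and the $(1+o(1))$ is removed by a blow-up lemma. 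Without an ingredient of this kind your approach cannot be repaired by a smoothing or biregularization argument, because the target inequality it aims at is simply not true.
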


\subsection{Graphons}

A \textit{graphon} (also known as a graph limit) is a symmetric measurable function $W : [0,1]^2 \to [0,1]$.
Define a \textit{signed graphon} as a symmetric measurable function $W : [0,1]^2 \to [-1,1]$.
A signed graphon is \textit{edge-dominated} if $W(x,y) \neq 0$ implies
\[
\int_0^1 ( W(x,t) + W(y,t) ) dt  \geq 0.
\]

Here we consider a continuous analogue of Problem~\ref{mainProblem}. Denote
\begin{equation}\label{kappa}
  \kappa:=\inf \frac{1}{2} \int_0^1 \int_0^1 W(x,y)dxdy
\end{equation}
where the infimum is taken over all
edge-dominated graphons $W$.

The following theorem is a standard 
result in the theory
of graph limits \cite{lovasz2012large}, we
include the proof in Appendix A for completeness. 

\begin{theorem}\label{abstract-nonsense}
(i) $g(n)\geqslant \kappa n^2$, in other words
$s(G,f)\geqslant \kappa n^2$ for any SED-pair
$(G,f)$ of order $n$;

(ii) $g(n)=(\kappa+o(1)) n^2$ for large $n$.
\end{theorem}

Theorems~\ref{mainT} and~\ref{optimalT} also have natural continuous analogues.

\paragraph{Structure of the paper.} Theorem~\ref{mainT}(ii) is proved in Section 2. Section 3 is devoted to the proof of Theorem~\ref{mainT}(i).
Section 4 cites a result, determining the maximal sum of squares of vertex degrees among all graphs with $n$ vertices and $e$ edges; we use it in Section 5, containing the proof of Theorem~\ref{optimalT}. Appendix A contains the proof of Theorem~\ref{abstract-nonsense}, Appendices B-D contain auxiliary calculations.

\section{Examples}

In this section we provide a sequence of SED-pairs that achieves the lower bound $-(1+o(1))\frac{n^2}{8(1+\sqrt{2})^2}$.

\subsection{A graphon example}

The following signed graphon realizes an example for Theorem~\ref{mainT}(ii). Put $[0,1] = A \sqcup B \sqcup C$, where
$|A| = 1-\frac{1}{\sqrt{2}}$, $|B| = \frac{1}{\sqrt{2}} - \frac{1}{2}$, $|C| = \frac{1}{2}$.
The function $W$ is defined in Fig.~\ref{graphonpicture}. Note that $W$ is edge-dominated: indeed, for $(x,y) \in A \times A$
\[
\int_0^1 (W(x,t) + W(t,y)) dt = 2\left(-\frac{1}{\sqrt{2}} |A| + |B| \right) = 0, 
\]
for $(x,y) \in A \times B$
\[
\int_0^1 (W(x,t) + W(t,y)) dt = -\frac{1}{\sqrt{2}} |A| + |B| + |A| + |B| - \frac{1}{\sqrt{2}} |C| = \frac{1}{2} - \frac{1}{2\sqrt{2}} > 0, 
\]
for $(x,y) \in B \times B$
\[
\int_0^1 (W(x,t) + W(t,y)) dt = 2\left(|A| + |B| - \frac{1}{\sqrt{2}} |C| \right) = 1 - \frac{1}{\sqrt{2}} > 0, 
\]
and for $(x,y) \in B \times C$
\[
\int_0^1 (W(x,t) + W(t,y)) dt = 2\left( |A| + |B| - \frac{1}{\sqrt{2}} |C| - \frac{1}{\sqrt{2}} |B| \right) = 0.
\]
Finally,
\[
\frac{1}{2}\int_0^1\int_0^1 W(x,y)dxdy = \frac{1}{2} \left( -\frac{|A|^2}{\sqrt{2}} + 2|A|\cdot|B| + |B|^2 - \frac{2|B|\cdot|C|}{\sqrt{2}} \right ) = \frac{1}{8(1 + \sqrt{2})^2}.
\]

    \begin{figure}[H]
\begin{center}
        \begin{tikzpicture}[scale=5]

\def\A{0.29}
\def\B{0.20}
\def\C{0.50}

    \draw(0,0) rectangle (1,1);
    \draw(\A, 0) -- (\A, 1);
    \draw({\A+\B}, 0) -- ({\A+\B}, 1);
    \draw(0, \A) -- (1, \A);
    \draw(0, {\A+\B}) -- (1,{\A+\B});
    \normalsize
    \draw(.5*\A, .5*\A) node {$-\cfrac 1{\sqrt2}$};
    \draw(.5*\A, {\A+.5*\B}) node{$1$};
    \draw({\A+.5*\B}, .5*\A) node{$1$};
    \draw({\A+.5*\B}, {\A+.5*\B}) node{$1$};
    \draw({\A+\B+0.5*\C}, 0.5*\A) node{$0$};
    \draw(0.5*\A, {\A+\B+0.5*\C}) node{$0$};
    \draw({\A+\B+0.5*\C}, {\A+0.5*\B}) 
        node{$-\cfrac 1{\sqrt2}$};
    \draw({\A+0.5*\B}, {\A+\B+0.5*\C}) 
        node{$-\cfrac 1{\sqrt2}$};
    \draw({\A+\B+0.5*\C}, {\A+\B+0.5*\C}) 
        node{$0$};
    \draw(0,0) node[below left]{$0$};
    \draw(1,0) node[below right]{$1$};
    \draw(0,1) node[above left]{$1$};
    \draw(0, .5*\A) node[left]{$A$};
    \draw(0, {\A+.5*\B}) node[left]{$B$};
    \draw(0, {\A+\B+.5*\C}) node[left]{$C$};
    \draw(.5*\A, 0) node[below]{$A$};
    \draw({\A+.5*\B}, 0) node[below]{$B$};
    \draw({\A+\B+.5*\C}, 0) node[below]{$C$};

\end{tikzpicture}
        \caption{A graphon example for Theorem~\ref{mainT}(ii)}
        \label{graphonpicture}
\end{center}
    \end{figure}

\subsection{An explicit graph approximation}

Here we provide the best approximation we can do. Fix $p$ and $q$ such that $p^2 = 2q^2 - 1$, and $p,q > 1$.

We need several auxiliary definitions. Define graph $K_{X,Y,\frac{k}{l}} = (X \cup Y, E_{X,Y,\frac{k}{l}})$ for $|X|=al, |Y|=bl$ and integer $a,b,k\leq l$.
Split $X$ onto $a$ disjoint sets of size $l$: $X = X_1\cup X_2\cup\dots\cup X_a$, $|X_i|=l$; also split  $Y$ onto the $b$ disjoint sets of the same size: $Y = Y_1\cup Y_2\cup \dots\cup Y_b$, $|Y_i|=l$.
For each pair $1\leq i\leq a, 1\leq j\leq b$ consider the following bipartite graph $G_{ij} = (X_i\cup Y_j, E_{ij})$ with parts $X_i$ and $Y_j$ (all graphs $G_{ij}$ are isomorphic).
Enumerate vertices as follows $X_i=\{v_1, v_2,\dots v_l\}$, $Y_j=\{u_1, u_2,\dots u_l\}$.
Define $E_{ij}$ as the set of all pairs $(v_g,u_h)$, for which $g-h \mod l$ lies in $\{1,2,\dots k\}$. Put
\[
E_{X,Y,\frac{k}{l}} = \bigcup_{1\leq i\leq a, 1\leq j\leq b} E_{ij}.
\]
Obviously the degree of every vertex in $G_{ij}$ equals to $k$, so the degree of a vertex in $K_{X,Y,\frac{k}{l}}$ is $bk=|Y|\frac{k}{l}$ for vertices in $X$, and $ak=|X|\frac{k}{l}$ for vertices in $Y$.

Now define graph $K_{X,\frac{k}{l}} = (X, E_{X,\frac{k}{l}})$ for $|X| = 2al$ and integer $a,k<l$.
Split $X$ onto $2l$ disjoint sets of size $a$: $X = X_1\cup X_2\cup\dots\cup X_{2l}$.
The edge between vertices $u$ and $v$ exists if and only if  $i-j \mod 2l$ lies in 
\[
\{-k,-(k-1),\dots, -2,-1,1,2,\dots ,k-1,k\},
\]
where $v \in X_i$, $u \in X_j$. Then the degree of every vertex in $K_{X,\frac{k}{l}}$ equals to $2ak=|X|\frac{k}{l}$.

Let $K_X = (X, E_X)$ be the complete graph (i.e.~every pair of vertices forms an edge) on the vertex set $X$. Degree of each vertex in $K_X$ equals to $|X|-1$.

Now we are ready to provide the desired construction. Let $p$ and $q$ be a positive solution of $p^2 = 2q^2 - 1$. Put 
\[
A=\{a_1,a_2,\dots, a_{2p^2}\}, \quad B_1 =\{b_1,b_2,\dots b_{2p(p-q)}\}, \quad B_2 =\{b_{2p(p-q)+1},b_{2p(p-q)+2}, \dots, b_{2pq}\}, 
\]
\[
C_1=\{c_1,c_2,\dots c_{6p(p-q)}\}, \quad C_2=\{c_{6p(p-q)+1}, c_{6p(p-q)+2}, \dots c_{2(p+q)p}\}.
\]
Define the vertex set 
\[
V = A \cup B_1 \cup B_2 \cup C_1 \cup C_2 
\] 
(so $n = 4p^2 + 4pq$). The edge set $E$ and weight function $f$ are defined by explicit expressions for $E_+$ and $E_-$:
\[
E_+=E_{A, B_1\cup B_2, \frac{1}{1}} \cup E_{B_1, \frac{p^2-pq-1}{p(p-q)}} \cup E_{B_1,B_2,\frac{1}{1}} \cup E_{B_2}; \quad \quad E_-=E_{A, \frac{q}{p}} \cup E_{B_1,C_2, \frac{q}{p}} \cup E_{B_2,C_1, \frac{q}{p}}, 
\cup E_{B_1,C_1, \frac{2pq - 2q^2 - 1}{2p(p-q)}} \cup E_{B_2,C_2, \frac{4q^2 - 2pq - 1}{2p(2q-p)}}.
\]
Since $p$ divides all of the cardinalities $|A|, |B_1|, |B_2|, |C_1|, |C_2|$; $2p(p-q)$ divides $|B_1|$, $|C_1|$, and $2p(2q-p)$ divides $|B_2|$, $|C_2|$, the definition of $f$ is correct.

Some annoying calculation gives
\[
s_{a_i} = 0, \quad s_{b_i} = p^2, \quad s_{c_i} = -p^2
\]
for every $i$.

Note that there is no edge between $A$ and $C$ or inside $C$. 
Also all edges inside $A$ of between $B$ and $C$ are negative, so our construction is a SED-pair.

Finally we count
\[
s[G,f] = \frac{1}{2} \sum_{v \in V} s_v = \frac{p^2 (|B_1| + |B_2| - |C_1| - |C_2|) }{2} = -p^4.
\]
Recall that $p^2 = 2q^2 - 1$ and $n = 4p^2 + 4pq = 2 p (2 p + \sqrt{2} \sqrt{1 + p^2})$. So
\[
\frac{s[G,f]}{n^2} = \frac{-p^4}{(2 p (2 p + \sqrt{2} \sqrt{1 + p^2}))^2} = 
-\frac{1}{8 (1+\sqrt{2})^2}  + \frac{5 \sqrt {2} - 7}{8 p^2} + \frac{31 \sqrt{2} - 44}{32 p^4} + O(p^{-5}).
\]
Since $n = (4 + 2 \sqrt{2}) p^2 + \sqrt{2} - \frac{1}{2\sqrt{2}p^2} + O(p^{-3})$
\[
s[G,f] = -\frac{n^2}{8(1 + \sqrt{2})^2} + \frac{3\sqrt{2} - 4}{4} n - \frac{1}{2(2+\sqrt{2})} + o(1).
\]
One can also derive
\[
s[G,f] = \lf -\frac{n^2}{8(1 + \sqrt{2})^2} + \frac{3\sqrt{2} - 4}{4} n \rf.
\]

\section{Lower bound on $-n^2/25$}

Consider an arbitrary SED-pair $(G,f)$, $G = (V,E)$. 

It is known that for each $v,u\in V$ if $(v,u)\in E_-\cup E_+$, then $s_v + s_u \geq 0$ (check it by hands or see Lemma 1 in~\cite{akbari2009signed}).
If $V_-$ is empty, then $s[G, f] \geq 0$. Let $x$ be 
\[
-\min_{v\in V_-}  s_v
\]
and consider an arbitrary vertex $a$ such that $s_a=-x$.
Let $N_-(a)$ be $\{v\in V|(a,v)\in E_-\}$. 
Then $|N_-(a)|\geq x$ and $s_v\geq x$ for each  $v\in N_-(a)$, so $N_-(a)\subset V_+$. 
Then
\[
x^2\leq \sum_{v \in N_-(a)}s_v \leq \sum_{v \in V_+}s_v.
\]
Clearly, $V_-$ is an independent set (i.e. has no edges inside) so
\[
\sum_{v \in V_+}s_v = \sum_{v \in V_-}s_v+2 \left(\sum_{(u,v) \in E_+|u,v \in V_+ }1 -\sum_{(u,v) \in E_-|u,v \in V_+ }1 \right)
\]
\[
\leq  \sum_{v \in V_-}s_v+2\frac{|V_+| \cdot (|V_+|-1)}{2}\leq \sum_{v \in V_-}s_v+|V_+|^2.
\]
So
\[
\sum_{v \in V_-}s_v\geq x^2-|V_+|^2;
\]
recall that
\[
\sum_{v\in V_+} s_v\geq x|N_-(a)|\geq x^2.
\]
On the other hand
\[
s[(G,f)] = \sum_{(x,y)\in E_+}1 - \sum_{(x,y)\in E_-}1=\frac{\sum_{v\in V} s_v}{2},
\]
and
\[
\sum_{v\in V} s_v=\sum_{v \in V_+}s_v+\sum_{v \in V_-}s_v\geq 2x^2-|V_+|^2.
\]
Also
\[
\sum_{v\in V} s_v=\sum_{v \in V_+}s_v+\sum_{v \in V_-}s_v\geq x^2-x|V_-|=-x(|V_-|-x)=-x(|V|-|V_+|-x).
\]
Put  $y=\frac{x}{|V|}$, $k=\frac{|V_+|}{|V|}.$
Then we have the following system of inequalities:
\[
\begin{cases}
s[(G,f)] \geq (y^2-\frac{k^2}{2})|V|^2\\
s[(G,f)] \geq \frac{-y(1-k-y)}{2}|V|^2.
\end{cases}
\]
So 
\[
g(n) \geq \min_{0\leq y\leq 1, 0\leq k \leq 1} \left (\max \left( y^2-\frac{k^2}{2},-\frac{y(1-k-y)}{2} \right) \right)n^2.
\]

One may check by computer (or read explicit calculus in Appendix B) that the minimum is $-\frac{1}{25}$ and is reached at $y = \frac{1}{5}$, $k = \frac{2}{5}$.

\section{Degree sequences of a graph}

Here we display the results from~\cite{ahlswede1978graphs}, they are required in the proof of Theorem~\ref{optimalT}; for a survey see~\cite{nikiforov2007sum}.

\begin{definition}
Let $n$, $e \leq \binom{n}{2}$ be integer numbers. Consider the unique representation
\[
e = \binom{a}{2} + b, \quad 0 \leq b < a.
\]
The quasi-complete graph $C_n^e$ with $e$ edges and $n$ vertices $v_1, \dots v_n$ has edges $(v_i, v_j)$ for $i,j \leq a$ and $i = a+1$, $j \in \{1\dots b\}$.
\end{definition}

\begin{definition}
Let $n$, $e \leq \binom{n}{2}$ be integer numbers. Consider the unique representation
\[
\binom{n}{2} - e = \binom{c}{2} + d, \quad 0 \leq d < c. 
\]
The quasi-star graph $S_n^e$ with $e$ edges and $n$ vertices $v_1, \dots v_n$ and the edges connect vertices $v_1, \dots, v_{n-c-1}$ with
all vertices and vertex $v_{n-c}$ is connected with vertices $v_1, \dots v_{n-d}$.
\end{definition}

Let $F(n,e)$ be the maximal value of 
\[
\sum_{v \in V} (\deg v)^2
\]
among the graphs $G = (V,E)$ with $n$ vertices and $e$ edges. We use the following result.

\begin{theorem}[Alshwede -- Katona,~\cite{ahlswede1978graphs}]
For every $n$ and $0 \leq e \leq \binom{n}{2}$ the value $F(n,e)$ is achieved on $C_n^e$ or $S_n^e$.
\end{theorem}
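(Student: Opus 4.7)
My plan is to translate the SED conditions into constraints on the degree sequences of the positive graph $G_+ = (V_+, E_+)$ and the bipartite negative graph $H = (V_+ \cup V_-, E_-)$, and to close the resulting system with the Ahlswede--Katona theorem. Let $m = |V_+|$. For $u \in V_+$ write $d_+(u),\, d_-(u)$ for the $G_+$- and $H$-degrees; for $v \in V_-$ write $d(v) := \deg_H(v)$, which equals its total degree since all edges at $v$ are negative. Then $s_u = d_+(u) - d_-(u) \geq 0$ and $s_v = -d(v) < 0$; the SED condition at a negative edge $(u,v)$ reduces to $s_u \geq d(v)$, and at a positive edge $(u,u')$ to $s_u + s_{u'} \geq 2$.

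The first step is to sum the inequality $s_u - d(v) \geq 0$ over $(u,v) \in E_-$. After substituting $s_u = d_+(u) - d_-(u)$ this produces the identity
\[
\sum_{u \in V_+} d_+(u)\,d_-(u) \;\geq\; B + C, \quad\text{where}\quad B := \sum_{u \in V_+} d_-(u)^2,\;\; C := \sum_{v \in V_-} d(v)^2.
\]
The Cauchy--Schwarz bounds $B \geq e_-^2/m$ and $C \geq e_-^2/(n-m)$ together with the trivial $d_+(u) \leq m-1$ then give $(m-1) e_- \geq B + C \geq n\, e_-^2/(m(n-m))$, hence
\[
e_- \;\leq\; \frac{m(m-1)(n-m)}{n},
\]
an estimate that is saturated by the biregular configuration.

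The second step is to produce a matching lower bound on $e_+$. For this I apply the Ahlswede--Katona theorem to $G_+$: $A := \sum_{u \in V_+} d_+(u)^2 \leq F(m, e_+)$, with $F(m, e_+)$ realised by the quasi-complete $C_m^{e_+}$ or by the quasi-star $S_m^{e_+}$. A second application of Cauchy--Schwarz to the same sum gives $\sum d_+(u)\,d_-(u) \leq \sqrt{AB}$, and comparison with the identity above yields $A \geq (B+C)^2/B$. Substituting the extremal values $B = e_-^2/m$ and $C = e_-^2/(n-m)$ (which are the values taken at the conjectured extremum, so this direction is only used in the region of interest) sharpens this into $F(m,e_+) \geq e_-^2 n^2/(m(n-m)^2)$, and inverting the explicit formula for $F$ on the two extremal families converts this into a lower bound $e_+ \geq \Phi(m, e_-)$ that is tight when $G_+ = K_m$, where $F = m(m-1)^2$.

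Combining the upper bound on $e_-$ with the lower bound $e_+ \geq \Phi(m, e_-)$, the quantity $s[(G,f)] = e_+ - e_-$ becomes a function of the single normalised parameter $k = m/n$. A short calculus exercise locates its minimum at $k = 1/3$, realised by $G_+ = K_{n/3}$, $d(v) = n/9$ for every $v \in V_-$, and $d_-(u) = 2n/9$, $s_u = n/9$ for every $u \in V_+$; at this configuration every inequality used above is tight and the minimum equals $-n^2/54$. The main obstacle will be the dichotomy imposed by Ahlswede--Katona: in the quasi-complete regime the argument above yields the sharp bound, so I will have to verify separately that in the quasi-star regime---where $F(m,e_+)$ is attained by concentrating the degree on few vertices---the forced lower bound on $e_+$ is at least as strong, so that no configuration beats $-n^2/54$. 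I expect this to follow from the fact that concentration makes $\sqrt{AB}$ smaller relative to $B+C$, so that the key identity $\sum d_+(u)\,d_-(u) \geq B+C$ forces an even larger $e_+$ for a given $e_-$.
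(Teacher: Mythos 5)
Your proposal does not prove the statement in question. The statement is the Ahlswede--Katona theorem itself: among all graphs with $n$ vertices and $e$ edges, the maximum of $\sum_v (\deg v)^2$ is attained by the quasi-complete graph $C_n^e$ or the quasi-star graph $S_n^e$. What you have written is instead a proof sketch for Theorem~3 of the paper (the $-n^2/54$ bound for the restricted class of SED-pairs), and it \emph{uses} the Ahlswede--Katona theorem as a black box in its second step (``I apply the Ahlswede--Katona theorem to $G_+$''). As an argument for the target statement this is circular: you cannot establish the extremal characterization of $F(n,e)$ by invoking that very characterization. Note that the paper itself offers no proof of this theorem either --- it is quoted from the 1978 paper of Ahlswede and Katona --- so there is no internal argument to match your proposal against; but the proposal, read as a proof of the stated theorem, is simply addressed to the wrong claim.

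A genuine proof of the Ahlswede--Katona theorem has nothing to do with signed edge domination. The standard route is: (1) show by a local exchange/compression argument that any maximizer of $\sum_v (\deg v)^2$ can be transformed, without decreasing the objective, into a \emph{threshold} graph (one whose neighborhoods are nested), since moving an edge endpoint from a lower-degree vertex to a higher-degree vertex increases the sum of squares; (2) parametrize threshold graphs with $e$ edges by their degree partition and show that the objective, as a function along this family, is maximized at one of the two endpoints of the family, which are exactly $C_n^e$ and $S_n^e$. None of this structure appears in your proposal. If your actual intent was to prove Theorem~3, you should say so and compare against Section~5 and Appendices~B--C of the paper, where the reduction to the optimization system~\eqref{mainEQ} is carried out; your step bounding $e_-$ by $m(m-1)(n-m)/n$ and your identification of the extremal configuration at $k=1/3$ are broadly consistent with that argument, but they are not a proof of the theorem you were asked to prove.
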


\begin{corollary}
\label{OptimalCor}
Put $\alpha = \frac{2e}{n^2}$. Then
\[
F (n,e) = (1+o(1))  \max \left ( \alpha^\frac{3}{2}, (1-\sqrt{1-\alpha})(\sqrt{1-\alpha}+\alpha) \right) n^3.
\]
\end{corollary}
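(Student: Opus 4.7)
The plan is to invoke the Alshwede-Katona theorem and compute the sum of squared degrees of $C_n^e$ and $S_n^e$ explicitly. Since $F(n,e)$ equals the larger of these two quantities, the corollary reduces to a direct asymptotic calculation of each.

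For $C_n^e$, the representation $e = \binom{a}{2} + b$ forces $a = (1+o(1))\sqrt{2e} = (1+o(1))\sqrt{\alpha}\,n$. Reading the degree sequence off the definition gives $b$ vertices of degree $a$, $a-b$ vertices of degree $a-1$, one vertex of degree $b$, and $n-a-1$ isolated vertices. Hence
\[
\sum_v (\deg v)^2 = b\,a^2 + (a-b)(a-1)^2 + b^2 = a^3 + O(n^2) = (1+o(1))\,\alpha^{3/2}\, n^3.
\]

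For $S_n^e$, set $\binom{n}{2} - e = \binom{c}{2} + d$ and $\beta := \sqrt{1-\alpha}$, so that $c = (1+o(1))\beta n$. A careful reading of the definition gives four blocks of degrees: $n-c-1$ vertices of degree $n-1$, one of degree $n-d-1$, $c-d$ of degree $n-c$, and $d$ of degree $n-c-1$. Dropping lower-order contributions yields
\[
(n-c)(n-1)^2 + c(n-c)^2 = n^3\bigl[(1-\beta) + \beta(1-\beta)^2\bigr].
\]
A short manipulation gives $(1-\beta) + \beta(1-\beta)^2 = (1-\beta)(1+\beta-\beta^2) = (1-\beta)(\alpha+\beta)$, which is exactly $(1-\sqrt{1-\alpha})(\sqrt{1-\alpha}+\alpha)$.

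Taking the maximum of the two asymptotic values and invoking Alshwede-Katona completes the proof. The main (and only) technical point is verifying that the discarded terms—namely $b^2$ in $C_n^e$ and the single $(n-d-1)^2$ term in $S_n^e$—are of order $O(n^2)$, which is immediate from $b < a \le n$ and $d < c \le n$. I do not expect any serious obstacle beyond bookkeeping.
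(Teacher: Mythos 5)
Your proposal is correct and is exactly the computation the paper implicitly relies on: the corollary is stated without proof right after the Ahlswede--Katona theorem, and the intended derivation is precisely your reading-off of the degree sequences of $C_n^e$ and $S_n^e$ with $a=(1+o(1))\sqrt{\alpha}\,n$ and $c=(1+o(1))\sqrt{1-\alpha}\,n$ and the identity $(1-\beta)(1+\beta-\beta^2)=(1-\sqrt{1-\alpha})(\sqrt{1-\alpha}+\alpha)$. The only caveat worth noting is that the discarded $O(n^2)$ terms are $o$ of the main term only when $\alpha$ is bounded away from $0$, which is the regime in which the corollary is used.
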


Define
\[
G(\alpha) := \alpha^\frac{3}{2}, \quad \quad H(\alpha) = (1-\sqrt{1-\alpha})(\sqrt{1-\alpha}+\alpha).
\]
We show that $G(\alpha) < H(\alpha)$ for $\alpha \in (0,1/2)$ and $G(\alpha) > H(\alpha)$ for $\alpha \in (1/2,1)$.
Define $t=\sqrt{1-\alpha}$.
Note that
\[
h^2(\alpha)-g^2(\alpha)=(1-t)^2(1+t-t^2)^2-(1-t^2)^3=t^2(1-t)^2(2t^2-1)>0.
\]
For $\alpha \in (1/2,1)$ one has $t \in \left (0, \frac{ \sqrt{2}}{2} \right)$ and $G(\alpha)>H(\alpha)$.
For $\alpha \in (0, 1/2)$ one has $t \in \left (\frac{ \sqrt{2}}{2}, 1 \right)$ and $G(\alpha)<H(\alpha)$.

There are several weaker and better-looking bounds on $F(n,e)$, but they do not meet our aims.

\section{Proof of Theorem 3}

Put $k = |V_+|$. Let the degrees of vertices in $G[V_+]$ be equal to $a_1, \dots a_k$;
the degrees of vertices in $G[V_+,V_-]$ be equal to $b_1, \dots b_k$ for $b_i \in V_+$ and $c_1, \dots c_{n-k}$ for $c_j \in V_-$. Define
\[
a = \frac{1}{k}\sum_{1 \leq i \leq k} a_i; \quad \quad b = \frac{1}{k} \sum_{1 \leq i \leq k} b_i; \quad \quad c = \frac{1}{n-k} \sum_{1 \leq j \leq n-k} c_j;
\]
by double-counting in the graph $G[V_+,V_-]$ we have $kb = (n-k)c$.

By the main condition, if we have an edge between $(v_i^+, v_j^-)$ then 
\[
a_i - b_i \geq c_j.
\]
Sum up all these inequalities; then every vertex $v_i^+$ is counted $b_i$ times, every vertex  $v_j^-$ is counted $c_j$ times. Hence
\[
\sum_{1\leq i \leq k} (a_i - b_i)b_i \geq \sum_{1 \leq j \leq n-k} c_j^2.
\]
Applying Cauchy -- Bunyakovsky -- Schwarz inequality, we get
\[
\sqrt{\sum_{1\leq i \leq k} a_i^2 \sum_{1\leq i \leq k} b_i^2} - \sum_{1\leq i \leq k} b_i^2 \geq  \sum_{1\leq i \leq k} (a_i - b_i)b_i.
\]
AM-GM inequality implies
\[
\sum_{1 \leq j \leq n-k} c_j^2 \geq (n-k) c^2 = \frac{k^2}{n-k}b^2.
\]
Put 
\[
\alpha = \frac{a}{k}; \quad \quad B = \sqrt{\frac{1}{k} \sum_{1\leq i \leq k} b_i^2}; \quad\quad K = \frac{k}{n}.
\]
Then we reduce our problem to the following optimization problem:
\begin{equation}
    \label{mainEQ}
\begin{cases}
W(\alpha)B - B^2 \geq b^2 \frac{K}{1-K};\\
\mbox{minimize} \quad \frac{\alpha}{2}K^2 - bK^2;\\
0 \leq \alpha \leq 1, \quad  0 \leq K \leq 1, \quad 0 \leq b \leq  B \leq \frac{1-K}{K},
\end{cases}
\end{equation}
where $W (\alpha) = \max(\sqrt{G(\alpha)}, \sqrt{H(\alpha)})$. We show that the desired minimum equals to $-\frac{1}{54}$; it can be reached by the example from Theorem~\ref{prev}(ii).
Note that a possible (with respect to conditions of the system~\eqref{mainEQ}) value of $(\alpha, b, B, K)$ may not correspond to a SED-pair.

\paragraph{Case 1.} In this case $\alpha \geq \frac{1}{2}$, so $W(\alpha) = \sqrt{G(\alpha)}$.
Then we have to solve the following system
\[
\begin{cases}
\alpha^{\frac{3}{4}} B - B^2 \geq b^2 \frac{K}{1-K};\\
0 \leq \alpha \leq \frac{1}{2}, \quad  0 < K < 1, \quad 0 \leq b \leq \frac{1-K}{K}, \quad  0 \leq B \leq \frac{1-K}{K};\\
\mbox{minimize} \quad \frac{\alpha}{2}K^2 - bK^2.
\end{cases}
\]
In Appendix C we show that the minimum is $-\frac{1}{54}$.

\paragraph{Case 2.} In this case $\alpha \leq \frac{1}{2}$. Then $W(\alpha) = \sqrt{H(\alpha)}$.
Then we have a deal with the following system
\[
\begin{cases}
\sqrt{(1-\sqrt{1-\alpha})(\sqrt{1-\alpha}+\alpha)} B - B^2 \geq b^2 \frac{K}{1-K};\\
0 \leq \alpha \leq \frac{1}{2}, \quad  0 < K < 1, \quad 0 \leq b \leq \frac{1-K}{K}, \quad  0 \leq B \leq \frac{1-K}{K};\\
\mbox{minimize} \quad \frac{\alpha}{2}K^2 - bK^2.
\end{cases}
\]
This system is solved in Appendix D; the minimum is bigger than the desired value $-\frac{1}{54}$.

So we prove $s[(G,f)] \geq - (1+o(1)) \frac{n^2}{54}$; Theorem~\ref{abstract-nonsense} finishes the proof.

\paragraph{Acknowledgments.} The research of Danila Cherkashin is supported by <<Native towns>>, a social investment program of PJSC <<Gazprom Neft>> (Sections 2 and 3)
and by Grant NSh-2540.2020.1 to support leading scientific schools of Russia (Section 5). 
The authors are grateful to Fedor Petrov for an introduction in graphon theory and to Georgii Strukov for some drawing.

\bibliographystyle{plain}
\bibliography{main}

\section*{Appendix A}

\begin{proof}[Proof of Theorem~\ref{abstract-nonsense}]
(i) Let $(G=(V,E),f)$ be an SED-pair of order $n$. We partition $[0,1]$ onto $n$ disjoint 
sets of
measure $1/n$ and identify these $n$ sets with
$n$ vertices of $G$. For points
$x,y\in [0,1]$ denote by $v,u$ the vertices
which contain them, respectively, and put
$$W(x,y)=
\begin{cases}
f(v,u),& \text{if}\, (v,u)\in E\\
0,& \text{otherwise}.
\end{cases}
$$
It is easy to see that 
$\int_0^1 (W(x,t)+W(y,t))dt=s_v+s_u\geqslant 0$
whenever $x\in v$, $y\in u$ and $(v,u)\in E$.
Thus signed graphon
$W$ is edge-dominated, and $\kappa\leqslant \frac12\int_0^1\int_0^1 W=\frac1{n^2} s(G,f)$
that proves (i).

\smallskip

(ii) Fix $\varepsilon\in (0,1)$ and
an edge-dominated signed graphon $W$
such that $\frac12\int_0^1\int_0^1 W<\kappa+\varepsilon$. Let $n$ be a (large)
integer.
Denote $k=\lfloor \varepsilon n\rfloor$,
$m=n-k$. Since $\varepsilon>0$
is arbitrary, and the lower bound $g(n)\geqslant \kappa n^2$ is already established in (i), for proving (ii)
it suffices to prove that
\begin{equation}\label{needed-bound}
    g(n)\leqslant 2kn+m^2(\kappa+\varepsilon)
\end{equation}
for all large enough $n$.

Choose $m$ points $v_1,\ldots,v_m \in [0,1]$ uniformly and independently 
at random. Denote $V=\{1,2,\ldots,n\}$,
and define the signed graph $G=(V,E)$
as follows:

1) if $i>m$, the vertex $i$ is joined with
all other vertices and $f(i,j)=1$ for all $j\in V\setminus \{i\}$;

2) if $i,j\leqslant m$, we join $i$
and $j$ by an edge with probability $|W(v_i,v_j)|$ and put $f(i,j)={\rm sign} W(v_i,v_j)$ if $i$ and $j$ become joined (the above events are independent).

If we define 
$$\tilde{f}(i,j)=
\begin{cases}
f(i,j),& \text{if}\, (i,j)\in E\\
0,& \text{otherwise},
\end{cases}
$$
then the expectation of $\tilde{f}(i,j)$
equals $W(v_i,v_j)$. If $v_1,\ldots,v_m$
are fixed, the Chernoff bound 
guarantees that: 

a) the probability that
\emph{$s_i-k=\sum_{j\leqslant m} \tilde{f}(i,j)$ differs
from $\sum_j W(v_i,v_j)$ by a value greater
than $k/5$} is exponentially small, and
this holds true even if $v_1,\ldots,v_m$ are fixed;

b) the probability
that \emph{$\sum_j W(v_i,v_j)$ differs from
$m\int_0^1 W(v_i,t)dt$ by a value greater than
$k/5$} is also exponentially small,
and this holds true even 
if $v_i$ is fixed;

c) the probability that 
\emph{$\sum_i \int_0^1 W(v_i,t)dt$
differs from $m\int_0^1 \int_0^1 W(x,y)dxdy$
by more than $k/5$} is also exponentially small.

Therefore with high probability none of the
above $2m+1$ 
events happens, and we get
\[
\left|s_i-k-\int_0^1 W(v_i,t)dt\right|\leqslant \frac{2k}{5}
\]
for all $i=1,\ldots,m$,
and 
\[
\left|\sum_{i=1}^m s_i-km-m^2\int_0^1\int_0^1W\right|\leqslant \frac{3km}{5}.
\]
These
bounds yield that $(G,f)$ is
an SED-pair, and 
\[
g(n)\leqslant s[G,f]=\frac12\sum_{j=1}^n s_j\leqslant k(n-1) + \frac{km}{2} + \frac{3km}{10} +
\frac12m^2\int_0^1\int_0^1 W\leqslant
2kn+m^2(\kappa+\varepsilon)
\]
that is \eqref{needed-bound}.
\end{proof}

\section*{Appendix B}

We have to calculate
\[
\min\left(\max\left( y^2-\frac{k^2}{2},-\frac{y(1-k-y)}{2} \right)\right)=-\frac{1}{2}\max\left(\min(k^2-2y^2, y-y^2-ky)\right).
\]
Let $k_1, y_1 \in [0,1]$ be any values representing this maximum (the maximum is reached by compactness).

First, we show that $y_1^2-\frac{k_1^2}{2} = -\frac{y_1(1-k_1-y_1)}{2}$.
Indeed, this equality means that $k_1=\frac{-y_1+\sqrt{5y_1^2+4y_1}}{2}$. Suppose the contrary; if $k_1>\frac{-y_1+\sqrt{5y_1^2+4y_1}}{2}$ then 
\[
\min(k_1^2-2y_1^2, y_1-y_1^2-k_1y_1)\leq  y_1-y_1^2-k_1y_1<
\]
\[
<y_1-y_1^2-y_1\frac{-y_1+\sqrt{5y_1^2+4y_1}}{2}=\left(\frac{-y_1+\sqrt{5y_1^2+4y_1}}{2}\right)^2-2y_1^2=
\]
\[
\min\left(y_1-y_1^2-y_1\frac{-y_1+\sqrt{5y_1^2+4y_1}}{2}, \left(\frac{-y_1+\sqrt{5y_1^2+4y_1}}{2}\right)^2-2y_1^2\right)
\]
and if $k_1<\frac{-y_1+\sqrt{5y_1^2+4y_1}}{2}$ then 
\[
\min(k_1^2-2y_1^2, y_1-y_1^2-k_1y_1)\leq  k_1^2-2y_1^2<
\]
\[
<\left(\frac{-y_1+\sqrt{5y_1^2+4y_1}}{2}\right)^2-2y_1^2=y_1-y_1^2-y_1\frac{-y_1+\sqrt{5y_1^2+4y_1}}{2} = 
\]
\[
\min\left(y_1-y_1^2-y_1\frac{-y_1+\sqrt{5y_1^2+4y_1}}{2}, \left(\frac{-y_1+\sqrt{5y_1^2+4y_1}}{2}\right)^2-2y_1^2\right).
\]
In both cases
\[
\min(k_1^2-2y_1^2, y_1-y_1^2-k_1y_1)<\min\left(y_1-y_1^2-y_1\frac{-y_1+\sqrt{5y_1^2+4y_1}}{2}, \left(\frac{-y_1+\sqrt{5y_1^2+4y_1}}{2}\right)^2-2y_1^2\right),
\]
and $0<\frac{-y_1+\sqrt{5y_1^2+4y_1}}{2}<1$ (because $y_1<\sqrt{5y_1^2+4y_1}<y_1+2$), so $(k_1, y_1)$ doesn't represent the maximum, a contradiction.

Since $y_1^2-\frac{k_1^2}{2} = -\frac{y_1(1-k_1-y_1)}{2}$ for $k_1=\frac{-y_1+\sqrt{5y_1^2+4y_1}}{2}$, one may search for $\max T(y)$ with $0\leq y \leq 1$, where 
\[
T(y)=y-y^2-y\frac{-y+\sqrt{5y^2+4y}}{2}=y-y\frac{y+\sqrt{5y^2+4y}}{2}.
\]
Consider the derivative of $T$
\[
T'(y)=\left(y-y\frac{y+\sqrt{5y^2+4y}}{2}\right)'=1-y-\frac{\sqrt{5y^2+4y}}{2}-y\frac{10y+4}{4\sqrt{5y^2+4y}}=
\]
\[
-\frac{(y+\sqrt{5y^2+4y})(5y-1)(y+1)}{\sqrt{5y^2+4y}(\sqrt{5y^2+4y}+1)}.
\]
For $y>\frac{1}{5}$ one has $T'(y)<0$, so $T(y)<T(\frac{1}{5})$ for each $y>\frac{1}{5}$.
Analogously $y<\frac{1}{5}$ one has $T'(y)>0$, so $T(y)<T(\frac{1}{5})$ for each $y<\frac{1}{5}$.
Then $T(y)\leq T(\frac{1}{5})=\frac{2}{25}$ for each $y \in [0,1]$.
So
\[
\min\left(\max\left( y^2-\frac{k^2}{2},-\frac{y(1-k-y)}{2} \right)\right)=-\frac{1}{2}\max\left(\min(k^2-2y^2, y-y^2-ky)\right)=-\frac{1}{2}\max T(y) =-\frac{1}{25}.
\]

\section{Appendix C}

Here we solve the system
\[
\begin{cases}
\alpha^\frac{3}{4} B - B^2 \geq b^2 \frac{K}{1-K};\\
\frac{1}{2} \leq \alpha \leq 1, \quad  0 < K < 1, \quad 0 \leq b \leq \frac{1-K}{K}, \quad  0 \leq B \leq \frac{1-K}{K};\\
\mbox{minimize} \quad \frac{\alpha}{2}K^2 - bK^2.
\end{cases}
\]

\paragraph{Case 1:} $K>\frac{1}{2}$.  Then by AM-GM inequality $\sqrt{G(\alpha)} \geq 2b\sqrt{\frac{K}{1-K}}$ and equality holds for $B=b\sqrt{\frac{K}{1-K}}$.
Then 
\[
b \leq \frac{\sqrt{G(\alpha)}}{2\sqrt{\frac{K}{1-K}}}= \frac{\sqrt{G(\alpha)}}{2}\sqrt{\frac{1-K}{K}}.
\]
Hence
\[
\frac{\alpha}{2}K^2 - bK^2  \geq  \frac{\alpha}{2}K^2 - \frac{\sqrt{G(\alpha)}}{2}\sqrt{1-K}K^{3/2} =: q(\alpha,K);
\]
we are going to minimize $q(\alpha, K)$. Derive with respect to $K$:
\[
\frac{d q(\alpha,K)}{d K} = K\alpha - \frac{\sqrt{G(\alpha)}}{2} \frac{3 - 4 K}{2\sqrt{\frac{1-K}{K}}}.
\]

Find the roots of the derivative. We may multiply by $\sqrt{\frac{1-K}{K}}$
\[
\sqrt{(1-K)K}\alpha = \sqrt{G(\alpha)} \left(\frac{3}{4}-K \right).
\]
Then $K<\frac{3}{4}$. Square the equation
\[
(1-K)K\alpha^2 = G(\alpha) \left (\frac{3}{4} - K \right)^2.
\]
It is quadratic in $K$
\[
(\alpha^2 + G(\alpha))K^2 - \left (\frac{3}{2}G(\alpha)+\alpha^2 \right) K+\frac{9}{16} G(\alpha) = 0.
\]
Then $D = \frac{3}{4}G(\alpha)\alpha^2 + \alpha^4$ and the roots are
\[
K_1=\frac{(\frac{3}{2}G(\alpha)+\alpha^2) + \sqrt{\frac{3}{4}G(\alpha)\alpha^2 + \alpha^4}}{2(\alpha^2+G(\alpha))}; \quad \quad 
K_2=\frac{(\frac{3}{2}G(\alpha)+\alpha^2) - \sqrt{\frac{3}{4}G(\alpha)\alpha^2 + \alpha^4}}{2(\alpha^2+G(\alpha))}.
\]
Obviously, the first root is always bigger than $3/4$. Note that
\[
K_2 = \frac{1}{2} + \frac{\frac{G(\alpha)}{2} - \sqrt{\frac{3}{4}G(\alpha)\alpha^2 + \alpha^4}}{2(\alpha^2+G(\alpha))}.
\]
Easily
\[
\sqrt{\frac{3}{4}G(\alpha)\alpha^2 + \alpha^4} > \alpha^2 > \frac{1}{2} \alpha^{1.5} = \frac{G(\alpha)}{2} 
\]
since $\alpha \geq \frac{1}{2}$ so the second root is smaller that $1/2$.
Hence we should check only $K = 1/2$ and $K = 1$. Clearly $q(\alpha,1)$ is non-negative; 
one may check (see Fig.~\ref{BCase1} that $q\left(\alpha,\frac{1}{2}\right)$ is bigger than $-\frac{1}{54}$.

 \begin{minipage}{\linewidth}
      \centering
      \begin{minipage}{0.45\linewidth}
          \begin{figure}[H]
              \includegraphics{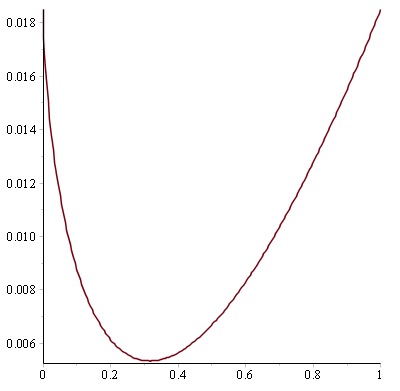}
               \caption{The plot of  $q \left(\alpha, \frac{1}{2}\right) + \frac{1}{54}$}
          \label{BCase1}
          \end{figure}
      \end{minipage}
      \hspace{0.05\linewidth}
      \begin{minipage}{0.45\linewidth}
          \begin{figure}[H]
                \includegraphics{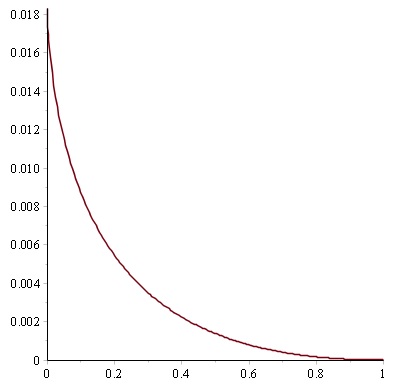}
                \caption{The plot of  $q(\alpha, K_0(\alpha)) + \frac{1}{54}$}
                \label{BCase2}
         \end{figure}
      \end{minipage}
  \end{minipage}

\paragraph{Case 2:} $K < \frac{1}{2}$. Consider
\[
\sqrt{G(\alpha)} \geq  B + \frac{b^2}{B} \frac{K}{1-K}.
\]
It also implies that $B \geq b\sqrt{\frac{K}{1-K}}$ but the condition $B\geq b$ is stronger since $K < \frac{1}{2}$. 
Then the optimal $B$ is equal to $b$ and hence $b = \sqrt{G(\alpha)}(1-K)$ and we minimize
\[
q (\alpha, K) := \frac{\alpha}{2}K^2 - \sqrt{G(\alpha)} (1-K)K^2. 
\]

The derivative with respect to $K$ is
\[
\alpha K - 2\sqrt{G(\alpha)}K + 3\sqrt{G(\alpha)}K^2.
\]
It has zeros at $0$ and $\frac{2\sqrt{G(\alpha)}-\alpha}{3\sqrt{G(\alpha)}}$.
The derivative is negative on $\left(0,\frac{2\sqrt{G(\alpha)}-\alpha}{3\sqrt{G(\alpha)}}\right)$, so $q(\alpha,K)$ is decreasing.
After $\frac{2\sqrt{G(\alpha)} - \alpha}{3\sqrt{G(\alpha)}}$ the derivative is positive, so the function increases.
Hence $q(\alpha, K)$ has local minimum in $K$ at 
\[
K_0(\alpha) = \frac{2\sqrt{G(\alpha)}-\alpha}{3\sqrt{G(\alpha)}} = \frac{2}{3}-\frac{\alpha}{3\sqrt{G(\alpha)}}.
\]
Substitution gives
\[
q(\alpha, K_0(\alpha)) = \frac{\alpha}{2}\left (\frac{2}{3}-\frac{\alpha}{3\sqrt{G(\alpha)}} \right)^2 - \sqrt{G(\alpha)} \left (\frac{1}{3}+\frac{\alpha}{3\sqrt{G(\alpha)}}\right) \left (\frac{2}{3}-\frac{\alpha}{3\sqrt{G(\alpha)}} \right)^2 = 
\frac{\sqrt{h(a)}}{54} \left(\frac{a}{\sqrt{h(a)}} - 2 \right)^3. 
\]
One may check (see Fig.~\ref{BCase2}) that $q(\alpha, K_0(\alpha)) > - \frac{1}{54}$.

\section{Appendix D}
Now we solve the system
\[
\begin{cases}
\sqrt{(1-\sqrt{1-\alpha})(\sqrt{1-\alpha}+\alpha)} B - B^2 \geq b^2 \frac{K}{1-K};\\
0 \leq \alpha \leq \frac{1}{2}, \quad  0 < K < 1, \quad 0 \leq b \leq \frac{1-K}{K}, \quad  0 \leq B \leq \frac{1-K}{K};\\
\mbox{minimize} \quad \frac{\alpha}{2}K^2 - bK^2.
\end{cases}
\]

First, consider $H(\alpha)$. Since it is positive, $\sqrt{H(\alpha)}$ and $H(\alpha)$ have the same intervals of monotonicity.
Change the variable $t=\sqrt{1-\alpha}$.
Note that $\alpha \in [0;1/2)$ implies  $t \in \left(\frac{1}{\sqrt{2}};1 \right]$. 
Then
\[
H(\alpha) = (1-t)(t+1-t^2)=t^3-2t^2+1.
\]
Since $H'(t) = 3t^2-4t = 3t(t-\frac{4}{3}) < 0$ for all $t$, $H(t)$ is decreasing function.
Note that $t(\alpha)$ is decreasing, so $\sqrt{H(\alpha)}$ and $H(\alpha)$ are increasing functions.

Consider two cases.

\paragraph{Case 1:} $K>\frac{1}{2}$.  Then by AM-GM inequality $\sqrt{H(\alpha)} \geq 2b\sqrt{\frac{K}{1-K}}$ and equality holds for $B=b\sqrt{\frac{K}{1-K}}$.
Then 
\[
b \leq \frac{\sqrt{H(\alpha)}}{2\sqrt{\frac{K}{1-K}}}= \frac{\sqrt{H(\alpha)}}{2}\sqrt{\frac{1-K}{K}}.
\]
Analogously to Appendix C we reduce to finding the minimum of 
\[
q(\alpha,K) :=  \frac{\alpha}{2}K^2 - \frac{\sqrt{H(\alpha)}}{2}\sqrt{1-K}K^{3/2}.
\]
Again derive with respect to $K$ and find the roots
\[
K_1=\frac{(\frac{3}{2}H(\alpha)+\alpha^2) + \sqrt{\frac{3}{4}H(\alpha)\alpha^2 + \alpha^4}}{2(\alpha^2+H(\alpha))}; \quad \quad 
K_2=\frac{(\frac{3}{2}H(\alpha)+\alpha^2) - \sqrt{\frac{3}{4}H(\alpha)\alpha^2 + \alpha^4}}{2(\alpha^2+H(\alpha))}.
\]
Obviously, $K_1 > 3/4$. So the only possible root is $K_2$. We should examine $K = K_2$ (in the case when it is bigger than $1/2$), $K = \frac{1}{2}$ and $K = 1$.
One can see (for example by compare the plots on Fig.~\ref{CCase1} and Fig.~\ref{CCase1K2}) that $K_2(\alpha) > \frac{1}{2}$ implies that $q(\alpha, K_2(\alpha))$ is bigger than $-\frac{1}{54}$.

 \begin{minipage}{\linewidth}
      \centering
      \begin{minipage}{0.45\linewidth}
          \begin{figure}[H]
              \includegraphics[width=\linewidth]{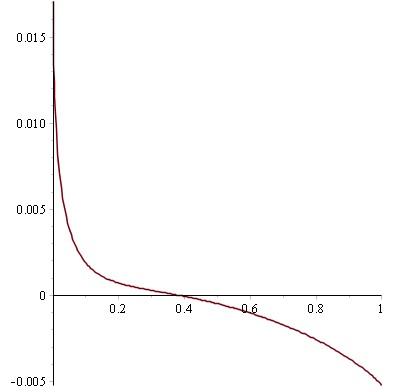}
              \caption{The plot of  $q(\alpha, K_2(\alpha)) + \frac{1}{54}$}
              \label{CCase1}
          \end{figure}
      \end{minipage}
      \hspace{0.05\linewidth}
      \begin{minipage}{0.45\linewidth}
          \begin{figure}[H]
              \includegraphics[width=\linewidth]{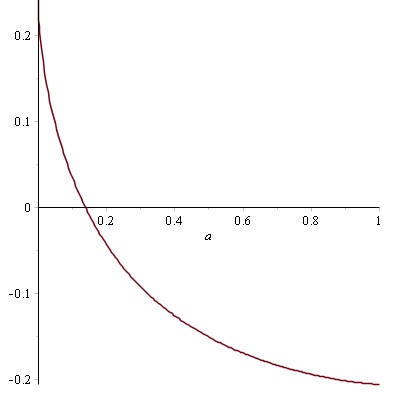}
              \caption{The plot of  $K_2(\alpha) - \frac{1}{2}$}
              \label{CCase1K2}
          \end{figure}
      \end{minipage}
  \end{minipage}

Finally, note that for $K = 1$ function $q$ is positive. For $K = 1/2$ one may see the plot on Fig.~\ref{CCase1prime} to check that $q\left(\alpha, \frac{1}{2}\right) > -\frac{1}{54}$.

 \begin{minipage}{\linewidth}
      \centering
      \begin{minipage}{0.45\linewidth}
          \begin{figure}[H]
              \includegraphics[width=\linewidth]{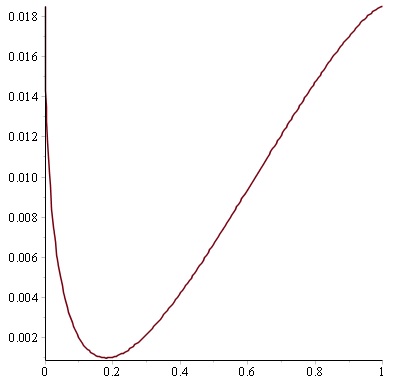}
              \caption{The plot of  $q\left(\alpha, \frac{1}{2}\right) + \frac{1}{54}$}
              \label{CCase1prime}
          \end{figure}
      \end{minipage}
      \hspace{0.05\linewidth}
      \begin{minipage}{0.45\linewidth}
          \begin{figure}[H]
                \includegraphics{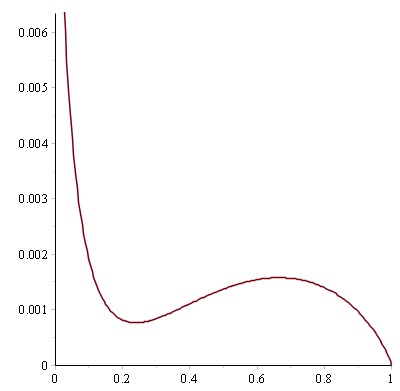}
                \caption{The plot of  $q \left(\alpha, K_0(\alpha)\right) + \frac{1}{54}$}
                \label{CCase2}
          \end{figure}
      \end{minipage}
  \end{minipage}

\paragraph{Case 2:} One can repeat step-by-step the second case of Appendix C. We minimize
\[
q (\alpha, K) := \frac{\alpha}{2}K^2 - \sqrt{H(\alpha)} (1-K)K^2. 
\]
Derivation and substitution gives
\[
q(\alpha, K_0(\alpha)) = \frac{\sqrt{H(a)}}{54} \left(\frac{a}{\sqrt{H(a)}} - 2 \right)^3. 
\]
One may check (see Fig.~\ref{CCase2}) that $q(\alpha, K_0(\alpha)) > - \frac{1}{54}$.

\end{document}